\newtheorem{theorem}{Theorem}[section]
\newtheorem{question}[theorem]{Question}
\newtheorem{lemma}[theorem]{Lemma} 
\newtheorem{proposition}[theorem]{Proposition} 
\newtheorem{conjecture}[theorem]{Conjecture} 
\crefname{conjecture}{Conjecture}{Conjectures}
\newtheorem{corollary}[theorem]{Corollary} 
\theoremstyle{definition}
\newtheorem{dfn}[theorem]{Definition}
\theoremstyle{remark}
\newtheorem{remark}[theorem]{Remark}
\newcommand{\BS}{\operatorname{BS}}
\newcommand{\Z}{\mathbb{Z}}
\newcounter{dawidcomments}
\newcounter{alancomments}
\newcounter{gilescomments}
\title[The Surface Group Conjectures with two generators]
{The Surface Group Conjectures for groups with two generators}
\author{Giles Gardam}
\address{
Mathematisches Institut, Universit\"at M\"unster, Einsteinstr.~62, 48149 M\"unster,
Germany}
\email{ggardam@uni-muenster.de}
\author{Dawid Kielak}
\address{
University of Oxford, Oxford, OX2 6GG,
UK}
\email{kielak@maths.ox.ac.uk}
\author{Alan D. Logan}
\address{
University of St Andrews, St Andrews, KY16 9SS,
UK}
\email{adl9@st-andrews.ac.uk}
\subjclass[2010]{
    20F65   
    (57M10, 
    20F05,  
    20F67,  
    20J05)
}
\keywords{One-relator group, surface group}
\begin{document}

\begin{abstract}
The Surface Group Conjectures are statements about recognising surface groups among one-relator groups, using either the structure of their finite-index subgroups, or all subgroups.	
We resolve these conjectures in the two generator case. More generally, we prove that every two-generator one-relator group with every infinite-index subgroup free is itself either free or a surface group. 
\end{abstract}
\maketitle

\section{Introduction}
\label{sec:introduction}

The role of surface groups in the pro-$p$ world is played by Demushkin groups, introduced in 1961 in the study of Galois theory. In 1973, Ando\v{z}ski\u{\i} \cite{Andozhskii1973} characterised Demushkin groups as precisely those finitely generated non-cyclic one-related pro-$p$ groups of cohomological dimension 2, all of whose maximal subgroups are also one-relator pro-$p$ groups.
Motivated by this result, in 1980 Mel'nikov \cite[Problem 7.36]{KhukhroMazurow2014} asked: let $G$ be a residually finite one-relator group with every subgroup of finite index also a one-relator group; is $G$ either free or a surface group? (The question appears also in \cite[Question 2.16]{Baumslag2017survey}.)

Let us introduce the following.
\begin{dfn}[Mel'nikov group]
A \emph{Mel'nikov group} is a non-free infinite one-relator group with every subgroup of finite index also a one-relator group.
\end{dfn}
Compared to Mel'nikov's original question, we added non-freeness and being infinite to the definition (finite cyclic groups are obvious counterexamples to the question), but we removed
residual finiteness. The reason for this is that it is not clear what role this last assumption plays. It seems that it was included in Mel'nikov's question to strengthen the analogy with Ando\v{z}ski\u{\i}'s result.
	

Residually finite Mel'nikov groups are not necessarily surface groups, with the Baumslag--Solitar groups $\operatorname{BS}(1, n)$ forming a family of counter-examples; the Kourovka Notebook records that this was pointed out by \v{C}urkin in 1982. 
This observation prompted a number of results and conjectures trying to understand which algebraic properties force a one-relator group to be a surface group; we refer the reader to the survey of Baumslag--Fine--Rosenberger for more details \cite[Section 2.4]{Baumslag2017survey}, but we remark that the strongest result \cite[Corollary 5]{Wilton2012}, due to Wilton, is for limit groups (that is, finitely generated fully residually free groups).
The two remaining conjectures here are the Surface Group Conjectures A \& B, and are as follows.
Recall that a surface group is the fundamental group of a closed surface of non-positive Euler characteristic (i.e.\ a closed surface other than the sphere or projective plane) and note that this includes the torus and Klein bottle, giving the groups $\operatorname{BS}(1, 1)$ and $\operatorname{BS}(1, -1)$ respectively.

Firstly, Ciobanu--Fine--Rosenberger conjectured that the Baumslag--Solitar groups $\BS(1, n)$ are the only non-surface Mel'nikov groups, and therefore suggested the following conjecture \cite{Ciobanu2013surface}, see also \cite[Conjecture 2.17]{Baumslag2017survey}.

\begin{conjecture}[Surface Group Conjecture A]
\label{conj:MandRF}
Let $G$ be a residually finite Mel'nikov group. Then $G$ is a surface group or $\BS(1, n)$ for some non-zero integer $n$.
\end{conjecture}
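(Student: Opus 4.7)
The plan is to reduce to the paper's main theorem on 2-generator 1-relator groups whose infinite-index subgroups are all free, and then handle the remaining case separately. Let $G = \langle a, b \mid r\rangle$ be a 2-generator residually finite Mel'nikov group. If $r = s^m$ is a proper power with $m \geq 2$, then $G$ has torsion; I would argue that combining residual finiteness with the 1-relator property on all finite-index subgroups is too restrictive in the presence of torsion, and quickly contradicts the hypotheses or reduces to a small known list (e.g.\ $\Z/m * \Z$, whose finite-index subgroups become free of rank $>1$). So assume $r$ is not a proper power; then $G$ is torsion-free and aspherical, so $\chi(G) = 0$. For any finite-index subgroup $H$, $\chi(H) = 0$; by Mel'nikov $H$ is 1-relator, and torsion-freeness plus asphericity give $\chi(H) = 2 - \operatorname{rank}(H) = 0$, forcing $\operatorname{rank}(H) = 2$. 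Thus every finite-index subgroup of $G$ is itself 2-generator 1-relator, a strong inheritance property.

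Next I would split on whether every infinite-index subgroup of $G$ is free. In the first case, the paper's main theorem applies: $G$ is free or a surface group. Mel'nikov excludes freeness, and the only 2-generator surface groups are $\Z^2 = \BS(1,1)$ and the Klein bottle group $= \BS(1,-1)$. To prove the main theorem I would start from the abelianization $\overline r \in \Z^2$ and, after an $\operatorname{Aut}(F_2)$ basis change, assume $\overline r = (d, 0)$, so that $b$ has exponent sum zero in $r$. The Magnus--Moldavanskii rewriting then expresses $G$ as an HNN extension with stable letter $b$ over a 1-relator group $N$ with generators $\{b^i a b^{-i}\}_{i \in \Z}$ and relators the iterated shifts of a rewrite of $r$. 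The infinite-index-freeness hypothesis forces $N$ to be free; applying Magnus's Freiheitssatz and Moldavanskii-type normal forms then constrains the rewritten relator so tightly that $r$ is, up to basis change and conjugation, a surface-type relator such as $[a, b]$ or $a^2 b^2$, yielding $G$ as a surface group.

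The main obstacle is the complementary case: $G$ has an infinite-index non-free subgroup, so the main theorem cannot be invoked, and one must conclude $G \cong \BS(1, n)$ for some $|n| \geq 2$ directly. My approach would be to exploit the infinite-index non-free subgroup to exhibit an infinite abelian subgroup of $G$, and then combine the strong rank-$2$ inheritance on all finite-index subgroups with Bass--Serre or JSJ-theoretic analysis to extract a Baumslag--Solitar relation from $r$. Relevant tools likely include Brown's analysis of 1-relator groups with finitely generated commutator subgroup, Louder--Wilton coherence for 1-relator groups, and structural results on residually finite 1-relator groups; the technical heart is ensuring that no ``exotic'' 2-generator 1-relator group survives the inheritance property without being either a surface group or a solvable Baumslag--Solitar group.
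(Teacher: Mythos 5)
Your proposal leaves the hardest part of the statement unproved. After correctly establishing that every finite-index subgroup of $G$ is a two-generator one-relator group, you split on whether all infinite-index subgroups of $G$ are free. But Conjecture A makes no hypothesis on infinite-index subgroups, so the ``complementary case'' --- $G$ has a non-free subgroup of infinite index --- carries essentially all of the content, and there you offer only a list of tools that ``likely'' suffice, not an argument: there is no indication of how an infinite-index non-free subgroup yields an infinite abelian subgroup, nor how a JSJ analysis would ``extract a Baumslag--Solitar relation'' from $r$. The paper avoids this case split entirely. Since every finite-index subgroup has rank exactly $2$, the Mann--Segal theorem (a residually finite group whose finite-index subgroups have uniformly bounded rank is virtually solvable) shows $G$ has no non-abelian free subgroup; Magnus rewriting then presents $G$ as an HNN extension over a finitely generated free edge group, which must be $\mathbb Z$, and if both attaching maps had proper image Bass--Serre theory would produce a non-abelian free subgroup. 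Hence the extension is ascending and $G \cong \BS(1,n)$. This conclusion already contains the two-generator surface groups $\BS(1,\pm 1)$, so your detour through the infinite-index-free case --- and through a Magnus--Moldavanskii classification of ``surface-type relators,'' which is itself only a hope and is in any case not how the paper proves that result (it uses $L^2$-Betti numbers, Brinkmann's theorem, RFRS and BNS invariants) --- is both incomplete and unnecessary. Note also that residual finiteness, the one hypothesis distinguishing Conjecture A, is never genuinely used in your plan; in the paper it is exactly what makes Mann--Segal applicable.

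A second, smaller gap is the torsion case. Your claim that torsion ``quickly contradicts the hypotheses or reduces to a small known list'' is not justified: the root $S$ of the relator need not be a generator, so $G$ need not resemble $\mathbb Z/m * \mathbb Z$. The paper's argument is that if $R=S^n$ with $n>1$ then $\chi(G)=1-|\mathbf x|+1/n$ by Chiswell's formula, the kernel $K$ of the induced map $G\to\mathbb Z/n$ contains all the torsion and satisfies $\chi(K)=n\chi(G)\in\mathbb Z$, so $K$ cannot be a one-relator group (a one-relator group with torsion has non-integer Euler characteristic), contradicting the Mel'nikov hypothesis. You need this, or an equivalent, before you may assume $r$ is not a proper power.
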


The second conjecture was suggested by Fine \cite[Question OR15]{Baumslag2002Open}, \cite[Conjecture 1.2]{Fine2007surface}, \cite[Question 2.18]{Baumslag2017survey}.
It is weaker than \cref{conj:MandRF} since the groups it concerns are residually finite anyway, being finitely generated and free-by-cyclic \cite{Baumslag1971RF}.

\begin{conjecture}[Surface Group Conjecture B]
\label{conj:MandIF}
Let $G$ be a Mel'nikov group with every subgroup of infinite index free. Then $G$ is a surface group.
\end{conjecture}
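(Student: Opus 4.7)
\emph{Plan.} I would first show $G$ is torsion-free: a non-trivial torsion element would generate a finite cyclic subgroup of infinite index in the (necessarily infinite) group $G$, which is not free. By Lyndon's Identity Theorem the presentation 2-complex of $G = \langle x, y \mid r \rangle$ is aspherical, so $\mathrm{cd}(G) \le 2$ and $\chi(G) = 0$. If $\mathrm{cd}(G) \le 1$ then Stallings' theorem gives $G$ free. Otherwise $\mathrm{cd}(G) = 2$, and it suffices to show $G \cong \Z^2$ or the Klein bottle group---the only two two-generator surface groups (both of Euler characteristic $0$).

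\emph{Baumslag--Solitar case.} Using the dichotomy for torsion-free one-relator groups, either $G$ is word-hyperbolic or $G$ contains some $\BS(m, n)$. The hypothesis passes to all subgroups (since $[H : K] = \infty$ implies $[G : K] = \infty$), so any embedded $\BS(m, n) \le G$ itself has every infinite-index subgroup free, forcing $(m, n) = (1, \pm 1)$: for $|n| \ge 2$, the normal closure of $a$ in $\BS(1, n)$ is $\Z[1/n]$, which is non-free and of infinite index. The resulting $\Z^2$ or Klein bottle subgroup is non-free, hence cannot have infinite index in $G$, so $G$ is virtually $\Z^2$. The classification of torsion-free two-generator virtually-$\Z^2$ groups then gives $G \cong \Z^2$ or the Klein bottle group.

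\emph{Hyperbolic case.} This is the main obstacle. I would invoke a fibering theorem---for instance Kielak's, since torsion-free one-relator groups are virtually RFRS (after Wise) and satisfy $b_1^{(2)} = 0$---to realise a finite-index subgroup $G' \le G$ as $G' \cong F_n \rtimes_{\varphi} \Z$, where the fibre $F_n$ is finitely generated free (freeness by hypothesis, finite generation from the fibering). If $n \le 1$ we reduce to the previous case via $G$ being virtually $\Z$ or $\Z^2$. For $n \ge 2$, I would choose a non-trivial $w \in F_n$ whose $\varphi$-orbit generates a proper subgroup of $F_n$; then $\langle w, t \rangle \cong F \rtimes \Z$ is a non-free subgroup of infinite index in $G'$ (and hence in $G$), contradicting the hypothesis. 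Guaranteeing the existence of such a $w$ in a word-hyperbolic (hence atoroidal) setting, and controlling the finite-index passage while preserving two-generator-like constraints, are the technical heart of the argument.
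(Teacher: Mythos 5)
Your torsion-freeness argument and your treatment of the non-hyperbolic (Baumslag--Solitar) case are essentially sound and match the paper's in spirit, though the paper reaches the mapping-torus structure more directly: since $G$ is non-free with $\chi(G)=0$ it surjects $\Z$ with free kernel, and an $L^2$-Betti number computation over the Linnell skew-field (\cref{first L2 Betti}) shows this kernel is \emph{finitely generated} free, after which Brinkmann's theorem supplies the $\Z^2$ subgroup in the non-hyperbolic case. You do not need the hyperbolic-vs-Baumslag--Solitar dichotomy for general one-relator groups, only the mapping-torus version.

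The genuine gap is in your hyperbolic case, and it is not merely the technical loose end you flag. Your plan is to produce a non-free infinite-index subgroup $\langle w,t\rangle \cong U\rtimes\Z$, where $U=\langle \varphi^k(w): k\in\Z\rangle$ is a proper subgroup of $F_n$. But properness of $U$ does not make $U\rtimes\Z$ non-free: any epimorphism $F_2=\langle a,b\rangle\to\Z$ killing $a$ exhibits $F_2$ itself as $U\rtimes\Z$ with $U=\langle b^kab^{-k}: k\in\Z\rangle$ infinitely generated, i.e.\ $\langle a, b\rangle$ is exactly a subgroup of your form and is free. To force non-freeness you would need $U$ to be finitely generated and non-trivial (so that it is a finitely generated normal subgroup of infinite index), and for an atoroidal (hyperbolic) $\varphi$ orbits typically grow exponentially and generate infinitely generated subgroups --- so the witness $w$ you are hoping for may simply not exist. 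The paper's actual argument at this point is different and is the key idea you are missing: in the hyperbolic case $G$ is virtually special (Hagen--Wise plus Agol), hence virtually RFRS, so some finite-index $H=F_n\rtimes k\Z$ has $b_1(H)\geqslant 2$; \emph{every} epimorphism $H\to\Z$ has free kernel (infinite index) which is finitely generated by \cref{first L2 Betti}, so the BNS invariant $\Sigma^1(H)$ is the whole character sphere and $H$ surjects $\Z^m$, $m\geqslant 2$, with finitely generated kernel $N$. Then $N$ is a finitely generated normal subgroup of infinite index in $F_n$, hence trivial, forcing $H\cong\Z^2$ and contradicting hyperbolicity. Your appeal to Kielak's fibering theorem via ``torsion-free one-relator groups are virtually RFRS (after Wise)'' is also not a correct attribution --- RFRS is available here only after one knows $G$ is a hyperbolic mapping torus of a finitely generated free group --- but this is secondary to the structural gap above.
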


We suggest a third conjecture of our own that is much stronger than \cref{conj:MandIF}.
We later justify this conjecture by proving that it follows from Gromov's famous Surface Subgroup Conjecture for Hyperbolic Groups (see \cref{thm:IF}).

\begin{conjecture}
\label{conj:IF}
Let $G$ be an infinite non-free one-relator group with every subgroup of infinite index free. Then $G$ is a surface group.
\end{conjecture}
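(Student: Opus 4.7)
The plan is to deduce \cref{conj:IF} from Gromov's Surface Subgroup Conjecture for one-ended hyperbolic groups. I would begin with elementary structural reductions. Since $G$ is infinite, any non-trivial finite subgroup has infinite index and must be free by hypothesis, hence trivial; so $G$ is torsion-free. By standard one-relator free-product decompositions (following Magnus), any splitting $G = H \ast F$ with both factors non-trivial presents $G$ as the free product of a smaller one-relator group with a free group; if the smaller factor is non-free, then, being a proper free factor of infinite index, it violates the hypothesis, and otherwise $G$ itself is free and excluded. So $G$ is freely indecomposable. Combined with torsion-freeness and Stallings's theorem on ends, $G$ has exactly one end, since the two-ended case forces $G \cong \Z$, which is free.

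The next ingredient is the dichotomy, recently established by Linton for one-relator groups: $G$ is hyperbolic if and only if it contains no Baumslag--Solitar subgroup $\BS(m,n)$. If $G$ is non-hyperbolic, then a contained $\BS(m,n)$ is non-free and must therefore have finite index in $G$. A direct case analysis shows that the only $\BS(m,n)$ without an infinite-index non-free subgroup are $\BS(1, \pm 1)$, namely $\Z^2$ and the Klein bottle group. (For instance, $\BS(1,n)$ with $|n| \geq 2$ has $\Z[1/n]$ as its derived subgroup, which is a non-finitely-generated abelian group, hence non-free, and of infinite index; similar infinite-index non-free subgroups are exhibited inside the non-solvable $\BS(m,n)$.) Thus $G$ is virtually $\Z^2$, and since it is torsion-free, the classification of two-dimensional crystallographic groups forces $G$ to be $\Z^2$ or the Klein bottle group, both surface groups.

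In the hyperbolic case, $G$ is one-ended and hyperbolic, so Gromov's Surface Subgroup Conjecture provides a surface subgroup $S \leq G$. Closed surface groups of non-positive Euler characteristic are non-free, so $S$ must have finite index in $G$, making $G$ virtually a surface group. To upgrade this to the conclusion that $G$ is itself a surface group, pass to a finite-index normal surface subgroup $N \trianglelefteq G$ and invoke Nielsen realization: the finite quotient $G/N$ acts by isometries of a suitable hyperbolic (or flat) metric on the closed surface $\Sigma$ with $\pi_1(\Sigma) = N$. The quotient orbifold $\Sigma / (G/N)$ has fundamental group $G$, and torsion-freeness of $G$ precludes cone points and corner reflectors, so this orbifold is a closed surface and $G$ is its surface group. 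The main obstacle is of course Gromov's Surface Subgroup Conjecture itself, which remains open in general and is precisely why \cref{conj:IF} must be stated as a conjecture; beyond that, the most delicate piece is the Baumslag--Solitar case analysis, which demands enough information about each $\BS(m,n)$ to exhibit an infinite-index non-free subgroup whenever $(|m|,|n|) \neq (1,1)$.
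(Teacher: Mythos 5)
What you have written is not a proof of \cref{conj:IF} --- and you say as much yourself: the argument is conditional on Gromov's Surface Subgroup Conjecture, which is open. The paper likewise does not prove the conjecture; it proves exactly the conditional reduction you describe (\cref{thm:IF}, via \cref{prop:hyperbolicity}) and, separately, the unconditional two-generator case (\cref{corol:2GenIF}). Your conditional argument is essentially the paper's: torsion-freeness from finite subgroups having infinite index, one-endedness because any splitting of a non-free group yields a non-free infinite-index subgroup, and in the hyperbolic case a surface subgroup of necessarily finite index upgraded to all of $G$ by Nielsen realisation (\cref{thm:virtual_surface}). The one genuinely different ingredient is how you handle the non-hyperbolic case. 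The paper observes that the kernel of an epimorphism $G\to\Z$ has infinite index, hence is free, so $G$ is a mapping torus of a free-group automorphism, and Mutanguha's theorem then produces a $\BS(1,n)$ subgroup --- only the solvable Baumslag--Solitar groups can occur, and ruling out $|n|\geqslant 2$ is a one-line computation with $\Z[1/n]$. You instead invoke Linton's characterisation of hyperbolicity for one-relator groups via the absence of Baumslag--Solitar subgroups, which postdates this paper and forces you to dispose of every $\BS(m,n)$; your treatment of the non-solvable ones is only sketched ("similar subgroups are exhibited"), though it can be completed, e.g.\ $\langle a, tat^{-1}\rangle\cong\langle x,y\mid x^{m}=y^{n}\rangle$ is a non-free infinite-index subgroup of $\BS(m,n)$ when $|m|,|n|\geqslant 2$. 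So: as a proof of \cref{thm:IF} your route is correct and marginally heavier than the paper's; as a proof of \cref{conj:IF} it has the same irreducible gap as everyone else's, and it also does not recover the unconditional two-generator result, which the paper obtains by an entirely different $L^{2}$-Betti number argument (\cref{vanishing first l2}).
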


Strebel proved that if $G$ is a Poincaré duality group, then all its infinite index subgroups have strictly smaller cohomological dimension \cite{Strebel1977}.
\Cref{conj:IF} addresses the question of when the converse holds for dimension $2$.

It is a standard fact that a one-relator group is two-generated if and only if it is (1) finite cyclic, so $G=\langle x\mid x^n\rangle$, or (2) a two-generator one-relator group (possibly $\Z$), so $G=\langle x,y \mid R\rangle$ with $R \neq 1$, or (3) the free group of rank 2, so $G=\langle x,y,z \mid R\rangle$ with $R$ primitive \cite[II.5.11]{LyndonSchupp2001} (this also can be deduced from a theorem of Stallings \cite[p.~171]{Stallings1965}).
Note that we adopt the convention that one-relator presentations have non-trivial relator; this is convenient when considering Euler characteristic, which indeed distinguishes between the 3 possibilities just given.
Two-generator one-relator groups are of central importance in the theory of one-relator groups.
In particular, all known pathological one-relator groups are two-generated, and Louder and Wilton have given a conceptual explanation for this importance \cite[Corollary 1.10 \& Conjecture 1.12]{Louder2018negative}.
Therefore, if the above conjectures were false, one might expect them to be false already when $G$ is two-generated.
This is however not the case.

\begin{theorem}
	\label{thm:main}
	The \cref{conj:IF,conj:MandIF,conj:MandRF} all hold when the group in question is two-generated.
\end{theorem}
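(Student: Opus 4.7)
The plan is to prove the stronger statement from the abstract—that any two-generator one-relator $G = \langle x, y \mid R \rangle$ with every infinite-index subgroup free is either free or a surface group—since the three Surface Group Conjectures in the two-generator case are immediate consequences. Assume $G$ is infinite and non-free; I want to show $G$ is a surface group. First a torsion reduction: if $R$ were a proper power $S^n$ with $n \geq 2$, then by the Karrass--Magnus--Solitar torsion theorem for one-relator groups, $\langle S \rangle$ would be a cyclic subgroup of order $n$, hence non-free and of infinite index in the infinite group $G$, contradicting the hypothesis. So $R$ is not a proper power, $G$ is torsion-free, the presentation $2$-complex is aspherical, and $\chi(G) = 0$. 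Using the surjection $\aut(F_2) \twoheadrightarrow GL_2(\Z)$, I change the free basis of $F_2$ so that $\sigma_x(R) = 0$, and Magnus-rewrite using $y_i := x^i y x^{-i}$: the relator becomes a word $\tilde R$ in the $y_i$, involving only $y_m, \dots, y_M$ for some $m \leq 0 \leq M$. Writing $r := M - m$, $G$ splits as the HNN extension $A_0 \ast_\phi$ with base $A_0 = \langle y_m, \dots, y_M \mid \tilde R \rangle$, edge groups $P = \langle y_m, \dots, y_{M-1} \rangle$ and $Q = \langle y_{m+1}, \dots, y_M \rangle$, and stable letter $x$ realising the shift $y_i \mapsto y_{i+1}$. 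Since $A_0$ lies in the infinite-index kernel of $(x \mapsto 1, y \mapsto 0)$, the hypothesis forces $A_0$ to be free, so $\tilde R$ is primitive in $F_{r+1}$ and $A_0 \cong F_r$.

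Case analysis on $r$ follows. By the Freiheitssatz, $P$ and $Q$ are free of rank $r$, and Schreier's index formula then gives $P = A_0 = Q$ whenever $r \geq 2$, so $G = F_r \rtimes_\phi \Z$; for $r = 1$ one obtains $G \cong \BS(p, q)$ with $\gcd(p, q) = 1$; and $r = 0$ gives $G = \Z$, contradicting non-freeness. In the $r = 1$ case, if $\{|p|, |q|\} = \{1\}$ then $G$ is $\Z^2$ (torus) or $\BS(1,-1)$ (Klein bottle) and we are done. Otherwise I exhibit a non-free infinite-index subgroup contradicting the hypothesis: when $|p| = 1$ and $|q| \geq 2$, the kernel of $\BS(1, q) \to \Z$ is $\Z[1/q]$, which is not free and has infinite index; and when $|p|, |q| \geq 2$, a Bass--Serre analysis identifies $\langle a, t a t^{-1} \rangle \subset \BS(p, q)$ with the torus knot group $T(p, q) = \langle u, v \mid u^q = v^p \rangle$, which has non-trivial centre and so is non-free, while an abelianisation computation shows that $T(p, q)$ has infinite index in $G$.

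The main obstacle is excluding $r \geq 2$. If $\phi \in \aut(F_r)$ admits a periodic conjugacy class $\phi^k(w) = g w g^{-1}$, then the commuting-element trick $[w, g^{-1} x^k] = 1$ produces a copy of $\Z^2$ in $G$; this $\Z^2$ has infinite index because $G$ contains non-abelian free subgroups (as $r \geq 2$) and so cannot be virtually $\Z^2$, contradicting the hypothesis. Hence $\phi$ must be atoroidal, and $G$ is hyperbolic by Brinkmann's theorem. The hard part will then be to exhibit a non-free infinite-index subgroup of this hyperbolic, two-generator, one-relator $G$ without invoking Gromov's Surface Subgroup Conjecture. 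My plan is to exploit the very rigid ``companion''-type form of $\phi$ on $F_r$ forced by primitivity of $\tilde R$, together with the constraint that $y_0 = y$ must normally $\phi$-generate $F_r$ (a consequence of two-generation of $G$), in order either to uncover a hidden $\Z^2$ coming from a relation in the $\phi$-orbit of $y_0$, or to shorten $|R|$ by a Nielsen-type transformation and set up an induction on $|R|$; failing that, I would appeal to recent work of Louder--Wilton or Linton on the structure of two-generator one-relator groups.
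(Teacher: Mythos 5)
Your proposal has two genuine gaps, one of scope and one of substance.

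First, the scope gap: the statement you set out to prove (every two-generator one-relator group with all infinite-index subgroups free is free or a surface group) does \emph{not} imply Surface Group Conjecture A (\cref{conj:MandRF}) in the two-generator case. A Mel'nikov group is defined only by the condition that its \emph{finite}-index subgroups are one-relator; nothing forces its infinite-index subgroups to be free, so residually finite Mel'nikov groups are simply not covered by your argument. The paper treats this case by a completely separate route (\cref{prop:2GenMel}): all finite-index subgroups are two-generator one-relator groups, so the Mann--Segal theorem on residually finite groups of uniformly bounded subgroup rank forces $G$ to have no non-abelian free subgroup, and the Magnus HNN splitting then pins $G$ down as an ascending HNN extension of $\Z$, i.e.\ $\BS(1,n)$. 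You need some argument of this kind; it is absent.

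Second, the substantive gap: your case $r\geqslant 2$ is exactly the hard case, and you explicitly do not close it. Two problems arise. (i) The Schreier-formula step claiming $P=A_0=Q$ when $r\geqslant 2$ is wrong as stated: a rank-$r$ subgroup of $F_r$ need not be all of $F_r$ (e.g.\ $\langle a^2,b\rangle\leqslant F(a,b)$ has rank $2$ and infinite index), and Schreier's formula only applies once you already know the index is finite. So you have not shown the kernel of $G\to\Z$ is finitely generated, which is the crux. The paper obtains this via \cref{first L2 Betti}: an agrarian/$L^2$-Betti number computation showing that $\beta_1^{(2)}(\ker\rho)$ is a finite integer, whence the free kernel is finitely generated. (ii) Even granting $G=F_n\rtimes_\phi\Z$ with $\phi$ atoroidal and $G$ hyperbolic, your plan ends with a hope to ``appeal to recent work.'' The paper finishes this case without any surface-subgroup input: hyperbolic free-by-cyclic groups are cubulated (Hagen--Wise) and hence virtually special and RFRS (Agol); a finite-index subgroup $H$ then has abelianisation of rank $\geqslant 2$, every epimorphism $H\to\Z$ has finitely generated (free) kernel by the same $L^2$ argument, so the BNS invariant $\Sigma^1(H)$ is the whole character sphere and $H$ surjects onto $\Z^m$, $m\geqslant 2$, with finitely generated kernel $N$; then $N$ is a finitely generated normal infinite-index subgroup of $F_n$, hence trivial, forcing $H\cong\Z^2$ and contradicting hyperbolicity. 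Your torsion reduction and the $r=0,1$ analysis are fine and parallel the paper's, but without (i), (ii), and the Mel'nikov case the proof is not complete.
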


The proof of \cref{conj:IF} (and thus \cref{conj:MandIF}) in the two-generator case follows from a more general theorem about indicable groups with vanishing first $L^2$-Betti number that have all infinite-index subgroups free. We show that such groups are necessarily either cyclic or $\BS(1,\pm1)$, see \cref{vanishing first l2}.
Note that we certainly need some assumptions on the group beyond having all infinite-index subgroups free, as for instance there exist non-cyclic groups all of whose proper non-trivial subgroups are infinite cyclic \cite{Olshanskii79noetherian}. 

\cref{vanishing first l2} relies on a technical statement, namely \cref{Ifreeprods}, that deals with torsion-free groups with vanishing first $L^2$-Betti number that satisfy the Atiyah conjecture, and in which all normal infinite-index subgroups are free products of finitely generated groups. We show that every epimorphism to $\mathbb Z$ from such a group has finitely generated kernel. Another way to say this is that such groups must have the first BNS-invariant equal to the entire character sphere.


We frequently use a number of standard facts about one-relator groups: that they contain torsion if and only if the relator is a proper power \cite[Theorem 4.13]{mks}; that in the torsion-free case the presentation complex is aspherical \cite{Lyndon1950, Cockcroft1954}, and hence that the Euler characteristic $\chi(G)$ of $G=\langle \mathbf x \mid R\rangle$ is $2-|\mathbf x|$; that in the torsion-free case the first $L^2$-Betti number is equal to the Euler characteristic \cite{DicksLinnell2007}.

\begin{remark}
After the first version of this article appeared online, Jack Button informed the authors of an alternative proof of \cref{conj:IF} in the two generator case, using the Coherence Theorem of Feighn--Handel~\cite{FeighnHandel1999} in place of $L^2$-Betti numbers to show finite generation of $K$ in the proof of \cref{vanishing first l2}.
\end{remark}

\subsection*{Acknowledgements}
The authors are grateful to Ilir Snopce for pointing out that Mel'nikov was motivated by results coming from the theory of pro-$p$ groups, and to Henry Wilton for insightful comments.

This work has received funding from
the European Research Council (ERC) under the European Union's Horizon 2020 research and innovation programme (Grant agreement No. 850930),
and from the Engineering and Physical Sciences Research Council (EPSRC), grants EP/R035814/1 and EP/S010963/1,
and was supported by the Deutsche Forschungsgemeinschaft (DFG, German Research Foundation) -- Project-ID 427320536 -- SFB 1442, as well as under Germany's Excellence Strategy EXC 2044--390685587, Mathematics M\"unster: Dynamics--Geometry--Structure.

\section{Groups with two generators}

In this section we prove \cref{thm:main}.

\subsection{Infinite-index subgroups}

First we will focus on \cref{conj:IF}, on groups where all subgroups of infinite index are free.

\begin{lemma}
	\label{torsion IF}
	Let $G$ be an infinite group. If all infinite-index subgroups of $G$ are free, then $G$ is torsion free.	
\end{lemma}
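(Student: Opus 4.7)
The plan is to argue by contradiction and use the elementary observation that a finite non-trivial group cannot be free. Specifically, I will suppose that $G$ contains an element $g$ of finite order $n > 1$ and then derive a contradiction from the hypothesis that all infinite-index subgroups of $G$ are free.

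First, I would consider the cyclic subgroup $H = \langle g \rangle$, which has order $n$ and is therefore finite. Since $G$ is infinite while $H$ is finite, the index $[G : H]$ must be infinite: otherwise $|G| = [G : H] \cdot |H|$ would be finite, contradicting the hypothesis that $G$ is infinite. Thus $H$ is an infinite-index subgroup of $G$.

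By the hypothesis of the lemma, every infinite-index subgroup of $G$ is free, so $H$ is free. But $H$ is a non-trivial finite group, and the only free group that is finite is the trivial group (any non-trivial free group contains $\mathbb{Z}$). This contradiction shows that $G$ has no torsion elements, i.e.\ $G$ is torsion free.

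There is essentially no obstacle here; the statement reduces to the fact that in an infinite group any finite subgroup automatically has infinite index. The lemma is really just a formal consequence of the hypothesis combined with the triviality of finite free groups, and I expect the author's proof to follow exactly this one-step argument.
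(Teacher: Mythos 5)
Your proof is correct and follows essentially the same one-step argument as the paper: a finite cyclic subgroup of an infinite group has infinite index, hence is free by hypothesis, hence trivial. The only cosmetic difference is that you phrase it as a proof by contradiction while the paper argues directly that any finite-order element is trivial.
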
	
\begin{proof}
	Let $g \in G$ be of finite order. The subgroup it generates is finite, and hence of infinite index in $G$, and hence free. But it is also finite, and therefore must be trivial. Thus $g=1$.
\end{proof}

Since in \cref{conj:IF} we are interested in infinite groups, we see now that in fact we are interested in torsion-free one-relator groups. When $|\mathbf x |=2$, such groups have Euler characteristic equal to zero, and so have first $L^2$-Betti number equal to zero \cite{DicksLinnell2007}. Hence, in the next couple of results we will focus on groups with first $L^2$-Betti number equal to zero, bearing in mind that groups of interest here will satisfy this property. The results that now follow are however more general.

Recall that an \emph{agrarian map} is a unit-preserving ring homomorphism $\alpha \colon \Z G \to \mathbb K$ where $\Z G$ is the integral group ring of a group $G$, and where $\mathbb K$ is a skew-field. This notion was introduced in \cite{Kielak2020}. There are three agrarian maps that are frequently used: one arises from the Atiyah conjecture, and the other two are used in defining Alexander polynomials, the multivariate and univariate kinds. All three share some structural properties, so we will stay in the level of generality that may then be applied to all three situations.

Let $\rho \colon G \to \mathbb Z$ be a quotient map and $t \in G$ be such that $\rho(t)$ generates $\Z$. Suppose that we have a skew-field $\mathbb L$ and a unit-preserving ring morphism $\alpha \colon \Z \ker \rho \to \mathbb L$. Suppose further that the action of $t$ on $\ker \rho$ by conjugation descends to an action on $\alpha(\Z \ker \rho)$, and that it can be extended to an action on the whole of $\mathbb L$. Using this extension, we form 
the ring of twisted Laurent polynomials $\mathbb L[t,t^{-1}]$; we may now also extend the domain of $\alpha$ and obtain a ring morphism
\[
\alpha \colon \Z G \to \mathbb L[t,t^{-1}].
\]
Finally, we let 
$\mathbb K$ 
be the Ore localisation of $\mathbb L[t,t^{-1}]$, and we follow $\alpha$ by the inclusion $\mathbb L[t,t^{-1}] \to \mathbb K$.
Let us now see how this general setup incorporates three situations alluded to above.

First, we may start with $\alpha\vert_{\Z \ker \rho}$ being the augmentation map followed by embedding $\Z$ into $\mathbb Q$. We then get $\mathbb L = \mathbb Q$ with trivial $t$-action. This choice is used in computing the univariate Alexander polynomial corresponding to the map $\rho$.

Second, we may take $\alpha\vert_{\Z \ker \rho}$ to be the ring map induced by the group homomorphism taking $\ker \rho$ first to $G$ and then to $H_1(G;\mathbb Q)$. Denoting the $\mathbb Q$-generators of the image of $\ker \rho$ in $H_1(G;\mathbb Q)$ by $s_1, \dots, s_n$, we obtain a ring map 
\[\alpha\vert_{\Z \ker \rho} \colon \Z \ker \rho \to \mathbb Z[s_1, {s_1}^{-1}, \dots, s_n, {s_n}^{-1} ].\]
We then change scalars from $\Z$ to $\mathbb Q$, and Ore-localise to the field of rational functions $\mathbb L =  \mathbb Q(s_1, \dots, s_n)$. The action of $t$ on $\mathbb L$ is trivial. 
This choice is used in the definition of the multivariate Alexander polynomial.

Third, when $G$ is torsion free and satisfies the Atiyah conjecture, we may take $\mathbb K$ to be the Linnell skew-field $\mathcal D(G)$ of $G$, and $\mathbb L$ to be the Linnell skew-field $\mathcal D(\ker \rho)$ of $\ker \rho$. The theory of Linnell skew-fields guarantees that $\mathbb K$ is the Ore localisation of $\mathbb L[t,t^{-1}]$, see for example \cite[Subsection 4.1]{Kielak2020}. The action of $t$ comes from the conjugation action within $\mathcal D(G)$. The map $\alpha \colon \Z G \to \mathcal D(G)$ is as described above, and its existence comes straight from the theory of Linnell skew-fields. This is the situation that is key for applications in this article.

We will proceed now with our general setup. Note that as $\mathbb K$ is a skew-field and a $G$-module, we may define the Betti numbers
\[
\beta_i^\mathbb K(G) = \dim_\mathbb K H_i(G;\mathbb K).
\]
When $\mathbb K = \mathcal D(G)$, these Betti numbers are precisely the $L^2$-Betti numbers of $G$.

\begin{proposition}
	\label{first L2 Betti}
	Let $G=K\rtimes \mathbb Z$ be finitely generated, and let $\mathbb K$ be as described above.
	If $\beta_1^{\mathbb K}(G) = 0$, then $\beta_1^{\mathbb K}(K)$ is a non-negative integer. 
\end{proposition}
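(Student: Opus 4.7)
The plan is to combine the Wang exact sequence for the extension $1 \to K \to G \to \mathbb{Z} \to 1$ with a principal-ideal-domain argument. Since $\mathbb{Z}$ has cohomological dimension one, the Lyndon--Hochschild--Serre spectral sequence with $\mathbb{K}$-coefficients collapses and yields
\[
H_1(K;\mathbb{K}) \xrightarrow{t_* - 1} H_1(K;\mathbb{K}) \to H_1(G;\mathbb{K}) \to H_0(K;\mathbb{K}) \xrightarrow{t_*-1} H_0(K;\mathbb{K}),
\]
where $t_*$ denotes the action on $K$-homology of a generator of $G/K$. The hypothesis $\beta_1^\mathbb{K}(G) = 0$ then immediately gives that $t_* - 1$ is surjective on $H_1(K;\mathbb{K})$.

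Next, I would equip $H_1(K;\mathbb{K})$ with the structure of a module over the Laurent polynomial ring $R = \mathbb{K}[s^{\pm 1}]$ with $s$ central, where $s$ acts as $t_*$. A chain-level representative of $t_*$ has the form $c \otimes m \mapsto \phi(c) \otimes tm$ (where $\phi$ is conjugation by $t$ on $K$), and left multiplication by $t$ on the coefficient commutes with right $\mathbb{K}$-multiplication, so no semilinear twist is needed: $s$ genuinely commutes with the right $\mathbb{K}$-action inherited from the $(\mathbb{Z}K,\mathbb{K})$-bimodule structure on $\mathbb{K}$. The ring $R$ is a (non-commutative) principal ideal domain. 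To show that $H_1(K;\mathbb{K})$ is finitely generated over $R$, I invoke Shapiro's lemma $H_*(K;\mathbb{K}) = H_*(G;\operatorname{Ind}_K^G\mathbb{K})$: the induced module $\operatorname{Ind}_K^G\mathbb{K} = \mathbb{Z}G \otimes_{\mathbb{Z}K} \mathbb{K}$ admits a commuting right $R$-module structure (with $s$ implemented as a suitable shift combining the $t$-grading with multiplication by $t$ in the coefficient) under which it is free of rank one, generated by $1 \otimes 1$. A finite partial free $\mathbb{Z}G$-resolution $\mathbb{Z}G^{m+1} \to \mathbb{Z}G \to \mathbb{Z} \to 0$ coming from a finite generating set of $G$, tensored with $\operatorname{Ind}_K^G\mathbb{K}$, then exhibits $H_1(K;\mathbb{K})$ as a subquotient of the finitely generated free right $R$-module $R^{m+1}$, hence finitely generated over $R$.

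Finally, the structure theorem for finitely generated modules over the PID $R$ decomposes $H_1(K;\mathbb{K})$ as a free part plus a finite direct sum of cyclic torsion summands $R/(p)$. Surjectivity of $s - 1$ forces the free part to vanish (because $s - 1$ is not a unit in $R$) and rules out $(s-1)$-torsion; each of the remaining torsion summands $R/(p)$ with $p$ coprime to $s - 1$ is a finite-dimensional $\mathbb{K}$-vector space, so summing their dimensions produces the non-negative integer $\beta_1^\mathbb{K}(K)$. The main obstacle is the finite-generation step: one must carefully identify the commuting right $R$-module structure on $\operatorname{Ind}_K^G\mathbb{K}$, verify that it realises $\operatorname{Ind}_K^G\mathbb{K}$ as a free rank-one $R$-module, and keep left- versus right-module conventions consistent throughout the application of Shapiro's lemma.
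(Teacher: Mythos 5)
Your argument is correct in outline and reaches the stated conclusion, but by a genuinely different route from the paper. The paper works over the twisted Laurent ring $\mathbb L[t,t^{-1}]$ inside $\mathbb K$: it takes the presentation chain complex of $G$, changes basis in $\partial_1$ over the skew-field $\mathbb L$, and uses the hypothesis that $\mathbb K$ is the Ore localisation of $\mathbb L[t,t^{-1}]$ to clear denominators and produce, in each free factor of $\ker\partial_1$, a non-zero Laurent polynomial $p_x$ lying in the image of $\partial_2$; the sum of the degrees of the $p_x$ then bounds $\beta_1^{\mathbb L}(K)=\beta_1^{\mathbb K}(K)$. You instead work over the untwisted ring $R=\mathbb K[s^{\pm1}]$ and replace the explicit matrix manipulation by Shapiro's lemma plus Noetherianity of $R$ (finite generation of $H_1(K;\mathbb K)\cong H_1(G;\mathbb ZG\otimes_{\mathbb ZK}\mathbb K)$ over $R$ follows because it is a subquotient of $C_1\otimes_{\mathbb ZG}\operatorname{Ind}_K^G\mathbb K\cong R^{m+1}$ with $C_1$ finitely generated, which uses finite generation of $G$), the structure theorem for finitely generated modules over a one-sided PID, and the Wang sequence to kill the free part. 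Both proofs are sound; yours is more conceptual and, notably, never uses the Ore-localisation structure of $\mathbb K$ over $\mathbb L$, so it applies to an arbitrary agrarian map together with a splitting $G=K\rtimes\mathbb Z$ --- slightly more general than what the paper needs. The price is exactly the bimodule bookkeeping you flag: one must verify that the right $R$-action on $\operatorname{Ind}_K^G\mathbb K$ given by $(g\otimes k)\cdot s=gt^{-1}\otimes tk$ is well defined over $\otimes_{\mathbb ZK}$, commutes with the left $\mathbb ZG$-action, makes the induced module free of rank one, and matches under Shapiro the $t_*$-action in the Wang sequence; these checks all go through. Two minor remarks: ruling out $(s-1)$-torsion is superfluous, since every cyclic torsion summand $R/pR$ with $p\neq 0$ is already finite-dimensional over $\mathbb K$ by the division algorithm; and the paper's method has the small advantage of producing an explicit upper bound (the sum of the degrees of the $p_x$) rather than bare finiteness.
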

\begin{proof}
	
	We let $\rho \colon G \to \mathbb Z$ denote the projection of $K\rtimes \mathbb Z$ onto the second factor. We pick a finite generating set $\mathbf x$ of $G$ with an element $t \in \mathbf x$ such that $\rho(t)$ generates $\mathbb Z$, and $\rho(x)$ is trivial for every $x \in \mathbf x \smallsetminus \{t\}$. 
	This means that to compute the first group homology of $G$ we may look at the chain complex of right $\mathbb Z G$-modules $C_\bullet = (C_i,\partial_i)$ with $C_i =0$ for all $i>2$ and with the first three terms as follows:
	\[
	\bigoplus_{\mathbf R} \mathbb Z G \xrightarrow{\partial_2} \bigoplus_{\mathbf x} \mathbb Z G \xrightarrow{\partial_1} \mathbb Z G
	\]
	where $\mathbf R$ is some possibly infinite indexing set (corresponding to relations in $G$). The modules above come equipped with obvious bases as right $\mathbb Z G$-modules. Clearly, the elements of these bases acted on by integer powers of $t$ form right $\mathbb Z K$-bases. Also, $\partial_1$ may be identified with the $(1 \times \mathbf x)$-matrix over $\mathbb Z G$ with entries $1-x$ for $x \in \mathbf x$.
	
	The same chain complex, now thought of as a chain complex of free $\mathbb Z K$-modules, computes the first group homology of $K$. More explicitly, 
	\[
	\beta_1^{\mathbb K}(K) = \dim_{\mathbb K}H_1(C_\bullet \otimes_{\mathbb Z K} \mathbb K).
	\]
	Since $\alpha(K) \subseteq \mathbb L$, and since $\mathbb L$ and $\mathbb K$ are both skew-fields, we see that
	\[
	\beta_1^{\mathbb K}(K) = \beta_1^{\mathbb L}(K).
	\]
	
	Suppose that there exists $s \in \mathbf x \smallsetminus \{t\}$ such that the image of $1-s \in \mathbb Z K$ in $\mathbb K$ is non-trivial; note that the image automatically lies in $\mathbb L$.
	We change the basis of the module $C_1 \otimes_{\mathbb Z G} \mathbb L [t^{-1},t] = \bigoplus_{\mathbf x} \mathbb L [t^{-1},t]$ by multiplying it on the left by the matrix equal to the identity except in the $s$-column, in which the $s$-entry is $\alpha(1-s)^{-1}$, and the $s'$-entry is \[-\alpha(1-s')\alpha(1-s)^{-1}\]
	 for every $s' \in \mathbf x \smallsetminus \{s\}$. The matrix is invertible over $\mathbb L [t^{-1},t]$ since it is a product of elementary matrices and a diagonal matrix with non-zero diagonal entries. After this change of basis, the differential $\partial_1\otimes_{\Z G} \mathrm{id}$ 
	becomes the projection onto the factor of $\bigoplus_{\mathbf x} \mathbb L [t^{-1},t]$ corresponding to $s$. In particular, $\ker \partial_1 \otimes_{\mathbb Z G} \mathrm{id}$ coincides with $\bigoplus_{\mathbf x \smallsetminus \{s\}} \mathbb L [t^{-1},t]$. 
	Since the change of basis is an automorphism of the $\mathbb L [t^{-1},t]$-module $C_1 \otimes_{\mathbb Z G} \mathbb L [t^{-1},t]$, it is automatically an isomorphism, and hence a change of basis, of the underlying $\mathbb L$-module, which is isomorphic to $C_1 \otimes_{\mathbb Z K} \mathbb L$.
	This means that we may indeed change the basis of $C_1 \otimes_{\mathbb Z G} \mathbb L[t,t^{-1}]$ as above without changing $\beta_1^{\mathbb K}(G)$ and $\beta_1^{\mathbb L}(K)$.
	In what follows we will work in this new basis.
	
	
	If no such $s$ exists, then the matrix representing $\partial_1$ after passing to $\mathbb L [t,t^{-1}]$ has all entries zero, except one, which is equal to $1-t$. Hence $\ker \partial_1 \otimes_{\mathbb Z G} \mathrm{id}$ coincides with $\bigoplus_{\mathbf x \smallsetminus \{t\}} \mathbb L [t^{-1},t]$. In this case we set $s = t$ for notational convenience.
	
	Now $\ker \partial_1 \otimes_{\mathbb Z G} \mathrm{id}$ in $C_1 \otimes_{\mathbb Z G} \mathbb K$ coincides with $\bigoplus_{\mathbf x \smallsetminus \{s\}} \mathbb K$. Since $H_1(C_\bullet \otimes_{\mathbb Z G} \mathbb K) = 0$, every element of $\bigoplus_{\mathbf x \smallsetminus \{s\}} \mathbb K$ lies in the image of $\partial_2 \otimes_{\mathbb Z G} \mathrm{id}$. Since $\mathbb K$ is the Ore localisation of $\mathbb L [t^{-1},t]$, this implies that every factor of $\bigoplus_{\mathbf x \smallsetminus \{s\}} \mathbb L [t^{-1},t]$ contains a non-zero element, say $p_x$, $x\in \mathbf x \smallsetminus \{s\}$, lying in the image of 
	\[\partial_2 \otimes_{\mathbb Z G} \mathrm{id} \colon C_2 \otimes_{\mathbb Z G} \mathbb L[t,t^{-1}] \to C_1 \otimes_{\mathbb Z G} \mathbb L[t,t^{-1}].\]
	 Since $p_x$ is Laurent polynomial in $t$, it has a \emph{degree}, namely the difference between the highest and the lowest power of $t$ appearing in $p_x$. It is immediate that 
	\[\beta_1^{\mathbb K}(K) = \beta_1^{\mathbb L}(K) = \dim_{\mathbb L} H_1(C_\bullet \otimes_{\mathbb Z K} \mathbb L) \]
	 is bounded above by the sum of the degrees of the polynomials $p_x$, which is a non-negative integer.
\end{proof}

Now we are going to apply the above considerations to groups in which we have some control over infinite-index subgroups.

\begin{proposition}
	\label{Ifreeprods}
	Let $G$ be a torsion-free group satisfying the Atiyah conjecture. Let $\phi \colon G \to \mathbb Z$ be an epimorphism whose kernel $K$ is a free product of finitely generated groups. If the first $L^2$-Betti number of $G$ is equal to zero, then $K$ is finitely generated.
\end{proposition}	
\begin{proof}
	Since $G$ is torsion-free, $K$ is a free product of infinite groups. The first $L^2$-Betti number of such a group is bounded below by the number of non-trivial free factors minus 1 by L\"uck's appendix to \cite{Brownetal2008}, and hence \cref{first L2 Betti} tells us that $K$ can only have finitely many free factors. It follows that $K$ is itself finitely generated.
\end{proof}

\begin{theorem}
	\label{vanishing first l2}
	Let $G$ be a finitely generated group with first $L^2$-Betti number equal to zero, such that $G$ admits an epimorphism to $\mathbb Z$. If all infinite-index subgroups of $G$ are free, then $G$ is isomorphic to either $\mathbb Z$ or $\mathrm{BS}(1,\pm1)$. 
\end{theorem}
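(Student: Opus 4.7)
My plan is to reduce to the case where $G = K \rtimes \mathbb Z$ with $K$ free of finite rank $n$, handle $n \in \{0, 1\}$ by direct inspection, and derive a contradiction for $n \geq 2$ via Bieri's characterization of $\mathrm{PD}^2$-groups.

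To begin, \cref{torsion IF} ensures that $G$ is torsion-free. The given epimorphism $\rho \colon G \to \mathbb Z$ splits because $\mathbb Z$ is free, so $G = K \rtimes \mathbb Z$ where $K = \ker \rho$ is of infinite index and hence free by hypothesis. Since $K$ is free and $\mathbb Z$ is elementary amenable, $G$ satisfies the Atiyah conjecture by a result of Linnell, so the Linnell skew-field $\mathcal D(G)$ is available. I then apply \cref{first L2 Betti} with $\mathbb K = \mathcal D(G)$ and $\mathbb L = \mathcal D(K)$: the hypothesis $b_1^{(2)}(G) = 0$ forces $b_1^{(2)}(K)$ to be a non-negative integer. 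Since $K$ is free, say $K = F_n$, and $b_1^{(2)}(F_n)$ equals $n - 1$ for positive finite $n$ while being infinite for infinite $n$, this forces $n$ to be finite.

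For $n = 0$ we obtain $G = \mathbb Z$; for $n = 1$ the two possible actions give $G \cong \BS(1, \pm 1)$. The main difficulty, as I see it, lies in ruling out $n \geq 2$. In that case $G = F_n \rtimes \mathbb Z$ admits an explicit finite presentation, is torsion-free, and has a two-dimensional mapping-torus classifying space, so $\operatorname{cd}(G) \leq 2$. Since $G$ cannot be free (a free group with an epimorphism to $\mathbb Z$ has kernel of rank $0$ or of infinite rank), Stallings's theorem yields $\operatorname{cd}(G) = 2$. The hypothesis that every infinite-index subgroup of $G$ is free implies, in particular, that every finitely generated such subgroup has cohomological dimension at most $1$. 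Bieri's characterization of Poincar\'e duality groups then shows that $G$ is a $\mathrm{PD}^2$-group, and since $G$ is torsion-free, by Eckmann--M\"uller and Eckmann--Linnell it is a closed surface group. But $\chi(G) = \chi(F_n)\chi(\mathbb Z) = 0$ forces $G$ to be either the torus or the Klein bottle group; in either case the kernel of the map to $\mathbb Z$ has rank $1$, contradicting $n \geq 2$.
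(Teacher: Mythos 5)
The first half of your argument --- splitting the epimorphism to write $G = K \rtimes \Z$ with $K$ free, invoking Linnell for the Atiyah conjecture, and applying \cref{first L2 Betti} with $\mathbb K = \mathcal D(G)$ to force $K = F_n$ with $n$ finite --- is exactly the paper's reduction, and your handling of $n=0$ and $n=1$ is correct. The gap is in the case $n \geqslant 2$. The step ``Bieri's characterization of Poincar\'e duality groups then shows that $G$ is a $\mathrm{PD}^2$-group'' has no theorem behind it: what you are invoking is the statement that a finitely presented group of cohomological dimension $2$, all of whose infinite-index subgroups have cohomological dimension at most $1$, is a $\mathrm{PD}^2$-group. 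That is precisely the converse of Strebel's theorem, which the paper explicitly flags as the open question underlying \cref{conj:IF}; if it were available, the conjecture would follow in a few lines for any number of generators and there would be no need for \cref{thm:IF}, which instead derives it from the Surface Subgroup Conjecture. The actual Bieri--Eckmann characterization requires computing $H^1(G;\Z G)=0$ and $H^2(G;\Z G)\cong\Z$; for $G=F_n\rtimes\Z$ with $n\geqslant 2$ the module $H^2(G;\Z G)$ is a quotient of the infinite-rank end module $H^1(F_n;\Z G)$, and the hypothesis on infinite-index subgroups gives no direct control over it.

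The paper closes this case with heavier input, and you would need something comparable. If $G=F_n\rtimes_\phi\Z$ is not hyperbolic, Brinkmann's theorem produces a $\Z^2$ subgroup, which must then have finite index, forcing $n=1$. If $G$ is hyperbolic, it is RFRS by Hagen--Wise and Agol, so some finite-index subgroup $H=F_n\rtimes k\Z$ has abelianisation of rank $m\geqslant 2$; running the finite-generation argument of \cref{first L2 Betti} for every epimorphism $H\to\Z$ shows that $\Sigma^1(H)$ is the whole character sphere, hence the kernel $N$ of $H\to\Z^m$ is finitely generated, and then $N$ sits inside $F_n$ as a finitely generated normal subgroup of infinite index, which must be trivial --- forcing $n=1$ and contradicting hyperbolicity. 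Either reproduce an argument of this kind or supply a precise citation for the $\mathrm{PD}^2$ claim; as stated, that claim is the missing content of the theorem rather than a quotable fact.
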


\begin{proof}
	The assumptions give us an epimorphism $G \to \mathbb Z$ with kernel $K$ that is necessarily free. It follows that $G$ is torsion free and that it satisfies the Atiyah conjecture (by the work of Linnell~\cite{Linnell1993}).
 	
	By \cref{Ifreeprods}, 
	$K =F_n$ for some $n$, and the action of $\mathbb Z$ is given by an automorphism $\phi \in \mathrm{Aut}(F_n)$. 
	If $n=0$ then $G=\mathbb Z$. Let us now assume that $G \neq \mathbb Z$, and hence that $n\geqslant 1$.
	
	If $G$ is not hyperbolic then $\phi$ is not atoroidal, as proven by Brink\-mann~\cite{Brinkmann2000}, and so $G$ contains $\mathbb Z^2$. This last subgroup must be of finite index, and therefore $G$ cannot contain a non-abelian free group. We conclude that $n=1$ and the result follows.
	
	If $G$ is hyperbolic then it is virtually RFRS by the work of Hagen--Wise~\cite{HagenWise2015} and Agol~\cite{Agol2013Haken}. Every non-abelian finitely generated RFRS group $G$ admits a finite-index subgroup $H$ whose abelianisation has rank greater than the original group. If our group $G$ is virtually abelian, then being hyperbolic and torsion-free it is isomorphic to $\mathbb Z$, which we have already discounted.
    Thus $G$ contains a finite-index subgroup $H$ with abelianization of rank at least $2$.
    The subgroup $H$ is moreover hyperbolic and all of its infinite-index subgroups are free.
	Now, again by \cref{Ifreeprods}, all epimorphisms $H \to \mathbb Z$ have finitely generated kernels. Thus, the first BNS invariant $\Sigma^1(H)$ coincides with the entire character sphere of $H$, and therefore $H$ fits into a short exact sequence
	\[
	N \to H \to \mathbb Z^m
	\]
	with $N$ finitely generated, $m\geqslant 2$, and with $\mathbb Z^m$ being the free part of the abelianisation of $H$, see \cite[Theorem B1]{Bierietal1987}.
	The free group $F_n$ in $H$ is the kernel of an epimorphism $H \to \mathbb Z$, and every such epimorphism has to factor through $\mathbb Z^m$ above. Hence $F_n$ itself fits into a short exact sequence
	 \[
	 N \to F_n \to \mathbb Z^{m-1}
	 \]
	with $m-1 > 0$. This implies that $N$ is a normal, finitely generated subgroup of $F_n$ of infinite index. The only such subgroup is the trivial group, which forces $n=1$ and $m=2$.
 So $H$ must be isomorphic to $\mathbb Z^2$, which is not hyperbolic. This is a contradiction.
\end{proof}

\begin{corollary}
	\label{corol:2GenIF}
	If all infinite-index subgroups of $G=\langle x, y\mid R\rangle$ are free, then 
	either $G$ is free or $G \cong \mathrm{BS}(1,\pm1)$.
\end{corollary}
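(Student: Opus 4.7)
The plan is to reduce the corollary to \cref{vanishing first l2}, whose hypotheses are almost immediately verified for two-generator one-relator groups. The three conditions to check are that $G$ is finitely generated, that $G$ admits an epimorphism to $\mathbb Z$, and that $\beta_1^{(2)}(G) = 0$; the fourth hypothesis (all infinite-index subgroups free) is built into the statement.

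First, I would observe that $G = \langle x, y \mid R\rangle$ is two-generated and hence finitely generated, and that its abelianisation $\mathbb Z^2 / \langle [R]\rangle$ has free rank at least one (since a single element of $\mathbb Z^2$ cannot generate a rank-two subgroup). In particular $G$ surjects onto $\mathbb Z$, so $G$ is infinite.

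Next comes the only slightly substantive step: establishing that the first $L^2$-Betti number vanishes. Because $G$ is infinite and all its infinite-index subgroups are free by hypothesis, \cref{torsion IF} forces $G$ to be torsion-free. Equivalently, the relator $R$ is not a proper power, so by the Lyndon--Cockcroft theorem (invoked in the introduction) the presentation 2-complex is aspherical and hence $\chi(G) = 2 - |\mathbf x| = 0$. Applying the Dicks--Linnell identification of $\chi(G)$ with $\beta_1^{(2)}(G)$ in the torsion-free one-relator setting, we conclude $\beta_1^{(2)}(G) = 0$.

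With all hypotheses in place, \cref{vanishing first l2} yields $G \cong \mathbb Z$ or $G \cong \mathrm{BS}(1,\pm 1)$. Since $\mathbb Z$ is free, the dichotomy of the corollary follows. There is no real obstacle in this argument; it is a bookkeeping verification that the setup of two-generator one-relator groups plugs directly into the general theorem, with the combination of \cref{torsion IF} and the Dicks--Linnell formula doing the only non-trivial work.
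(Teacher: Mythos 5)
Your argument is correct and follows essentially the same route as the paper: apply \cref{torsion IF} to get torsion-freeness, deduce $\chi(G)=0$ and hence vanishing first $L^2$-Betti number via Lyndon--Cockcroft and Dicks--Linnell, and then invoke \cref{vanishing first l2}. The only (harmless) additions are your explicit verification that $G$ surjects onto $\mathbb Z$ and is therefore infinite, which the paper leaves implicit.
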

\begin{proof}
	By \cref{torsion IF}, $G$ is torsion free. Therefore either $R$ is trivial, and so $G$ is free, or $R$ is not trivial, and hence the first $L^2$-Betti number of $G$ is zero. In the latter case we may use \cref{vanishing first l2}.
\end{proof}

Clearly the above corollary confirms \cref{conj:IF,conj:MandIF} when $|\mathbf x|=2$.

\subsection{Mel'nikov groups}
Next we turn to \cref{conj:MandRF} on Mel'nikov groups, i.e.\ on one-relator groups all of whose finite index subgroups are themselves one-relator.
First we make a general observation.

\begin{proposition}
	\label{torsion}
	Every Mel'nikov group is torsion-free.
\end{proposition}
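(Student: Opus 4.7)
The plan is to argue by contradiction. Suppose $G = \langle \mathbf{x}\mid S^n\rangle$ is a Mel'nikov group with torsion; then $n\geq 2$, $S$ is not a proper power, and since $G$ is infinite one has $|\mathbf{x}|\geq 2$. I want to produce a normal finite-index subgroup $H\trianglelefteq G$ which the Mel'nikov hypothesis forces to be one-relator with torsion, and then derive a contradiction by showing that $H$ has more conjugacy classes of maximal finite cyclic subgroups than is permitted for such a group.

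The input I would use from the theory of one-relator groups with torsion is classical and due to B.\,B.~Newman: in a one-relator group with torsion $\langle \mathbf{z}\mid R^p\rangle$ with $R$ not a proper power, every finite subgroup is conjugate to a subgroup of $\langle R\rangle$; in particular $\langle R\rangle$ is, up to conjugacy, the unique maximal finite cyclic subgroup, and $\langle R\rangle$ is malnormal, so its normaliser is itself.

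To construct $H$, I would observe that $G/\langle\langle S\rangle\rangle$ is a quotient of the torsion-free one-relator group $F(\mathbf{x})/\langle\langle S\rangle\rangle$, whose first Betti number is at least $|\mathbf{x}|-1\geq 1$, hence surjects onto $\mathbb{Z}$. Composing gives $\phi\colon G\twoheadrightarrow\mathbb{Z}$ killing $S$, and I set $H=\phi^{-1}(2\mathbb{Z})$. Then $H$ is normal of index $2$ and contains $\langle S\rangle$, and by the Mel'nikov hypothesis $H=\langle\mathbf{y}\mid T^{n'}\rangle$ with $n'\geq 2$. Applying Newman in $G$ shows every $G$-conjugate of $\langle S\rangle$ is maximal finite cyclic in $G$, and the same conclusion transfers to $H$ (any finite cyclic overgroup in $H$ is still finite cyclic in $G$, hence of order at most $n$). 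Because $H\trianglelefteq G$ with $\langle S\rangle\subseteq H$, all these conjugates lie in $H$, and the $H$-orbits on them are in bijection with $H\backslash G/N_G(\langle S\rangle) = H\backslash G/\langle S\rangle = G/H$, which has two elements. So $H$ has at least two distinct $H$-conjugacy classes of maximal finite cyclic subgroups, contradicting the application of Newman to $H$ itself (which makes $\langle T\rangle$ the unique such class up to conjugacy).

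The main obstacle is simply locating and correctly citing the requisite Newman input; the combinatorial counting of double cosets is essentially trivial once malnormality of the relator-root cyclic subgroup is available. If the malnormality statement is not cleanly available in the literature in the form needed, an equivalent route is to use the fact that the centraliser of a finite-order element in a one-relator group with torsion is the cyclic subgroup it generates, whence $N_G(\langle S\rangle) = \langle S\rangle$ follows because any element of the normaliser would, by Newman, itself lie in some finite cyclic subgroup conjugate into $\langle S\rangle$.
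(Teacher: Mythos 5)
Your proposal is correct, but it takes a genuinely different route from the paper's. The paper's proof is a short Euler characteristic computation: by Chiswell's formula, a one-relator group with torsion $G=\langle\mathbf{x}\mid S^n\rangle$ has $\chi(G)=1-|\mathbf{x}|+1/n\notin\mathbb{Z}$, while the kernel $K$ of the composite $G\twoheadrightarrow\mathbb{Z}\twoheadrightarrow\mathbb{Z}/n$ has index $n$, contains all torsion of $G$ (torsion dies in $\mathbb{Z}$), and satisfies $\chi(K)=n\cdot\chi(G)\in\mathbb{Z}$; hence $K$ is a finite-index subgroup with torsion that cannot be one-relator. You instead invoke the Karrass--Magnus--Solitar/Newman structure theory of torsion (finite subgroups conjugate into $\langle S\rangle$, cyclic centralisers, whence $N_G(\langle S\rangle)=\langle S\rangle$) and count conjugacy classes of maximal finite subgroups in an index-two normal subgroup via double cosets. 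Your counting is sound: with $N_G(\langle S\rangle)=\langle S\rangle\leqslant H$ and $H\trianglelefteq G$ of index $2$, the $H$-classes of $G$-conjugates of $\langle S\rangle$ biject with $H\backslash G/\langle S\rangle=G/H$, giving two classes where the classification of torsion in a one-relator presentation of $H$ permits only one. The inputs you need do exist in the literature: that torsion elements are conjugates of powers of $S$ is essentially the same classical fact the paper already cites, and the self-normalisation of $\langle S\rangle$ follows from Newman's cyclic-centraliser theorem together with the Fischer--Karrass--Solitar description of $\langle\langle S\rangle\rangle$ as a free product of conjugates of $\langle S\rangle$ with a free group (which shows all finite, not just finite cyclic, subgroups are conjugate into $\langle S\rangle$). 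The trade-off: the paper's argument needs only multiplicativity of $\chi$ and Chiswell's formula and is self-contained in two lines, whereas yours requires heavier structure theory but produces a more explicit obstruction --- a finite-index subgroup with two conjugacy classes of maximal finite subgroups --- rather than merely a forbidden value of the Euler characteristic.
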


\begin{proof}
	A one-relator group $G=\langle \mathbf{x}\mid R\rangle$ has torsion if and only if the word $R\in F(\mathbf{x})$ is a proper power, that is, $R=S^n$ for some $n>1$ maximal. Such a group $G$ has Euler characteristic $\chi(G)=1-|\mathbf{x}|+1/n$ \cite[Theorem~4]{Chiswell1976Euler}, which in particular is not an integer.
	
	So, let $G=\langle \mathbf{x}\mid S^n\rangle$ be an infinite one-relator group with torsion. Then $G$ surjects onto $\mathbb{Z}$, and every torsion element of $G$ is contained in the kernel of this map. Hence, $G$ surjects onto the cyclic group of order $n$ with kernel $K$ containing all the torsion elements of $G$. Therefore, $K$ is not torsion-free. Also,
	\[\chi(K)=n\cdot\chi(G)=n-n|\mathbf{x}|+1\in\mathbb{Z}\textrm{.}\]
	Therefore, $K$ contains torsion but has integer Euler characteristic, and so $K$ is not a one-relator group, again by \cite[Theorem~4]{Chiswell1976Euler}.
\end{proof}

Now we will use a theorem of Mann--Segal to resolve \cref{conj:MandRF} in the two-generated case; note that the theorem builds on the work of Lubotzky--Mann \cite{LubotzkyMann1989rank}, which in particular applies the Feit--Thompson Odd Order Theorem.

\begin{theorem}
	\label{prop:2GenMel}
	A two-generated residually finite Mel'nikov group is isomorphic to $BS(1, n)$ for some $n$.
\end{theorem}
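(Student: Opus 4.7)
The plan is to bootstrap from the Mel'nikov condition to Baumslag--Solitar in three steps: use the Euler characteristic to constrain the ranks of finite-index subgroups, apply the Mann--Segal theorem for virtual solubility, and then run a rigidity argument to identify $G$.

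First, $G$ is torsion-free by \cref{torsion}, so $\chi(G) = 2 - 2 = 0$. Any finite-index subgroup $H \leq G$ is torsion-free one-relator by the Mel'nikov property, and a one-relator presentation $\langle \mathbf z \mid S \rangle$ of $H$ gives $\chi(H) = 2 - |\mathbf z|$. Multiplicativity of $\chi$ under finite index forces $\chi(H) = [G:H]\chi(G) = 0$, whence $|\mathbf z| = 2$: every finite-index subgroup of $G$ is two-generated, so $G$ has finite upper rank. The Mann--Segal theorem then tells us that the finitely generated residually finite group $G$ is virtually soluble of finite rank. Intersecting the finitely many $G$-conjugates of a finite-index soluble subgroup yields a finite-index normal soluble subgroup $N \leq G$ which, by the previous observation, is two-generator and one-relator; the classification of soluble two-generator one-relator groups identifies $N \cong \BS(1,n)$ for some nonzero integer $n$.

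Second, I upgrade $N \cong \BS(1,n)$ to $G \cong \BS(1,m)$ by cases. If $|n| = 1$, then $N$ is $\mathbb Z^2$ or the Klein bottle group, so $G$ is torsion-free and virtually $\mathbb Z^2$; the two-dimensional Bieberbach classification then forces $G \cong \BS(1, \pm 1)$. If $|n| \geq 2$, then $\BS(1,n)$ has trivial centre, so $N \cap C_G(N) = 1$; since $N$ has finite index in the torsion-free group $G$ the centraliser $C_G(N)$ must be finite, hence trivial, giving an injection $G \hookrightarrow \aut(N)$. The automorphism group $\aut(\BS(1,n))$ is known explicitly (Collins and others), and the torsion-free finite-index overgroups of $\BS(1,n)$ inside it can be enumerated and checked to be themselves Baumslag--Solitar.

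The main obstacle is this last identification of torsion-free finite extensions of $\BS(1,n)$ as Baumslag--Solitar groups. The cohomological bookkeeping is made tractable by the very restricted structure of $\aut(\BS(1,n))$, but still requires some care; an alternative route would be to invoke a Tits-alternative-type statement for two-generator one-relator groups to conclude directly that the virtually soluble group $G$ is itself soluble, and then apply the classification of soluble one-relator groups to $G$ directly.
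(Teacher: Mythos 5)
Your first paragraph reproduces the paper's opening moves: torsion-freeness via \cref{torsion}, the Euler characteristic computation forcing every finite-index subgroup to be a two-generator one-relator group, and the appeal to Mann--Segal. After that the routes diverge. The paper extracts from Mann--Segal only the fact that $G$ contains no non-abelian free subgroup, and then works with $G$ itself: Magnus rewriting exhibits $G$ as an HNN extension with free edge group, the edge group must be $\mathbb Z$ (a non-abelian free edge group would be a free subgroup), and the extension must be ascending (otherwise Bass--Serre theory produces a non-abelian free subgroup), so $G\cong\BS(1,n)$ directly. You instead descend to a normal finite-index soluble subgroup $N\cong\BS(1,n)$ and then try to climb back up to $G$, and it is this ascent that is not actually carried out.

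Concretely, in the case $|n|\geqslant 2$ the assertion that the torsion-free finite-index overgroups of $\inn(N)\cong N$ inside $\aut(\BS(1,n))$ ``can be enumerated and checked to be themselves Baumslag--Solitar'' is the entire remaining content of the proof, and it is not routine: $\out(\BS(1,n))$ is infinite whenever $n$ has more than one prime divisor (for example $a\mapsto a^{2}$, $t\mapsto t$ has infinite order in $\out(\BS(1,6))$), so one must first locate the finite subgroups of an infinite group, then for each decide torsion-freeness of the preimage, and then identify the resulting extension --- e.g.\ one must recognise groups of the form $\mathbb Z[1/n]\rtimes_{u}\mathbb Z$ for a unit $u$ that is a root of $n$ modulo inner units as Baumslag--Solitar groups, or rule them out. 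None of this appears in your write-up, so as it stands the proof is incomplete. (Two smaller points: the classification of soluble two-generator one-relator groups also allows $N\cong\mathbb Z$, which you should exclude by noting that $G$ would then be torsion-free and virtually cyclic, hence $\mathbb Z$, hence free, contradicting the Mel'nikov hypothesis; and your $|n|=1$ case is fine and amounts to the Nielsen realisation statement the paper records as a separate theorem.) Your closing ``alternative route'' --- prove that a two-generator one-relator group with no non-abelian free subgroup is already soluble, then classify --- is in substance exactly the paper's argument via Magnus rewriting, and is the cleanest way to close the gap; I would replace the $\aut(N)$ argument with it.
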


\begin{proof}
	Let $G$ be a 2-generator Mel'nikov group.
	As we have just seen, $G$ is torsion free and its Euler characteristic is zero. This implies that the same two properties are enjoyed by all finite index subgroups of $G$, and hence each of these subgroups is specifically a \emph{two-generator} one-relator group.
	
	By a theorem of Mann and Segal \cite[Theorem A]{MannSegal1990uniform}, any residually finite group whose finite index subgroups have uniformly bounded rank is virtually solvable, so in particular $G$ has no non-abelian free subgroups.	
	
	Magnus rewriting allows us to write $G$ as an HNN extension with finitely generated vertex and edge groups, and with the edge group free. Since $G$ is not $\mathbb Z$, the edge group is non-trivial. We immediately conclude that the edge group is $\mathbb Z$. Also, if the attaching maps embedding the edge group into the vertex group both have proper image, then one easily constructs a non-abelian free subgroup in $G$ using Bass--Serre theory.
	Hence $G$ must be an ascending HNN extension of $\Z$, that is, it must be isomorphic to $\operatorname{BS}(1, n)$.
\end{proof}

\section{Connections}

In this section we connect \cref{conj:MandRF,conj:MandIF,conj:IF} to recent advances and other conjectures in Geometric Group Theory.

\subsection{Negative immersions}
\label{sec:immersions}

Louder--Wilton in \cite{Louder2018negative} introduced the concept of \emph{having negative immersions} for two-dimensional CW-complexes (for a comparison with the stronger notation of negative immersions in the sense of Wise \cite{Wise2020coherence}, see \cite[Section 3.4]{Louder2021uniform}).
One-relator groups with negative immersions are hyperbolic and virtually special \cite{linton2022one}.
Louder--Wilton give a group theoretic characterisation of negative immersions for presentation complexes of one-relator groups, and we use the following form of that characterisation.

\begin{theorem}[{\cite[Theorem 1.2]{Louder2021uniform}}]
	\label{thm:free_subgroups_detect_negative_immersions}
	The presentation complex of a one-relator group has negative immersions if and only if every two-generated subgroup is free.
\end{theorem}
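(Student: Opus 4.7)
The plan is to prove both implications by translating between the topological property of negative immersions and the algebraic condition that every $2$-generated subgroup is free. Throughout, write $X$ for the standard presentation $2$-complex of $G = \langle \mathbf{x} \mid w \rangle$, and recall that $X$ has negative immersions when every combinatorial immersion $Y \to X$ from a compact connected $2$-complex with non-free $\pi_1(Y)$ satisfies $\chi(Y) < 0$.

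For the forward direction, I would argue by contrapositive. Take a $2$-generated non-free subgroup $H \le G$ and lift it to a subgroup $\widetilde{H} \le F(\mathbf{x})$ of rank at most $2$. A short case analysis reduces to the case that $\widetilde{H}$ has rank exactly $2$ and contains some conjugate of $w$ (else $H$ is itself cyclic and hence free). Let $\Gamma$ be the Stallings core graph of $\widetilde{H}$, so $\chi(\Gamma) = -1$, and form $Y$ from $\Gamma$ by attaching a $2$-cell for each closed path in $\Gamma$ reading a conjugate of $w$. Then $Y$ combinatorially immerses into $X$, the usual Stallings-style identification gives $\pi_1(Y) = H$, and the presence of at least one $2$-cell yields $\chi(Y) \ge 0$, contradicting negative immersions.

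For the converse, which I expect to be the main obstacle, I would route through Puder's \emph{primitivity rank} $\pi(w)$: the minimum rank of a subgroup of $F(\mathbf{x})$ that contains $w$ as a non-primitive element. By the analysis above, the statement ``every $2$-generated subgroup of $G$ is free'' is equivalent to $\pi(w) \ge 3$, because a non-free $2$-generated $H \le G$ corresponds exactly to a rank-$2$ subgroup of $F(\mathbf{x})$ in which (a conjugate of) $w$ fails to be primitive. It thus suffices to show that negative immersions of $X$ is also equivalent to $\pi(w) \ge 3$. Starting from a hypothetical immersion $Y \to X$ violating negative immersions, I would minimise over the cell count of $Y$, and attempt to show that the minimal such $Y$ has $1$-skeleton of rank exactly $2$ and carries exactly one $2$-cell; the attaching map then exhibits a rank-$2$ subgroup of $F(\mathbf{x})$ in which $w$ is non-primitive, forcing $\pi(w) \le 2$.

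The principal difficulty is this extremal analysis: one must rule out configurations in which the first Betti number of the $1$-skeleton of $Y$ exceeds $2$ while additional $2$-cells compensate to keep $\chi(Y) \ge 0$ and $\pi_1(Y)$ non-free. Controlling such configurations seems to require the Louder--Wilton technology of stackings on $2$-complexes (together with their rational Euler characteristic estimates for fibre products of immersions into $X$), which provides the combinatorial rigidity needed to collapse a minimal counterexample to the rank-$2$, single-$2$-cell shape and so close the argument.
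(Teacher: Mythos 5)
First, a point of orientation: the paper does not prove this statement at all --- it is quoted verbatim from Louder--Wilton \cite[Theorem 1.2]{Louder2021uniform} and used as a black box. So the real question is whether your sketch could stand in for their proof, and it cannot: the steps that carry the weight are either incorrect as stated or are deferred back to the very results being cited.

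The concrete gap is in your forward direction and, consequently, in the claimed equivalence with the primitivity rank. If $H \le G$ is $2$-generated and you lift generators to obtain $\widetilde{H} \le F(\mathbf{x})$ of rank at most $2$, then $H \cong \widetilde{H} / (\widetilde{H} \cap \langle\langle w \rangle\rangle)$, and this intersection can be a non-trivial normal subgroup of $\widetilde{H}$ even though $\widetilde{H}$ contains no conjugate of $w$ whatsoever: elements of the normal closure are \emph{products} of conjugates of $w^{\pm 1}$, and only single conjugates are visible as $w$-reading closed paths in the core graph $\Gamma$. So the dichotomy ``either $H$ is cyclic or $\widetilde{H}$ has rank $2$ and contains a conjugate of $w$'' is false; in the bad case your complex $Y$ is just the graph $\Gamma$, with $\pi_1(Y) \cong F_2 \neq H$, and no violation of negative immersions is produced. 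For the same reason, the equivalence ``every $2$-generated subgroup of $G$ is free $\Leftrightarrow \pi(w) \geqslant 3$'' does not follow ``by the analysis above'': the implication $\pi(w) > 2 \Rightarrow$ all $2$-generated subgroups are free is precisely the main theorem of Louder--Wilton's work on negative immersions, and even the converse needs their non-trivial result that a minimal-rank subgroup of $F(\mathbf{x})$ containing $w$ non-primitively has its one-relator quotient embedding into $G$ (so that the image subgroup really is non-free rather than a further, possibly free, quotient). Your final paragraph then defers the remaining equivalence (negative immersions $\Leftrightarrow \pi(w) > 2$) wholesale to ``the Louder--Wilton technology of stackings'', which is again the content of the theorem. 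In short: you have correctly identified the right intermediate invariant, but every implication that does real work is either wrong as written or is the theorem itself.
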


The groups $G=\langle \mathbf{x}\mid R\rangle$ of \cref{conj:IF} have negative immersions when $|\mathbf{x}|\geqslant3$:

\begin{theorem}
	\label{thm:SGC_negative_immersions}
	Let $G=\langle \mathbf{x}\mid R\rangle$ be an infinite one-relator group with every subgroup of infinite index free. If $|\mathbf{x}|\geqslant 3$ then $G$ has negative immersions.
\end{theorem}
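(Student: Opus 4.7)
The plan is to apply the Louder--Wilton characterisation \cref{thm:free_subgroups_detect_negative_immersions} and reduce the theorem to showing that every two-generated subgroup $H\leqslant G$ is free. Subgroups of infinite index are free by the standing hypothesis, so the only case requiring genuine work is that of a two-generated subgroup of finite index.

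Suppose then that $H\leqslant G$ has $[G:H]=k<\infty$. By \cref{torsion IF} the group $G$ is torsion-free, its presentation $2$-complex is aspherical, and Dicks--Linnell give $\beta_1^{(2)}(G) = -\chi(G) = |\mathbf{x}|-2 \geqslant 1$. Multiplicativity of $L^2$-Betti numbers under finite-index inclusions then yields
\[
\beta_1^{(2)}(H) \;=\; k\bigl(|\mathbf{x}|-2\bigr) \;\geqslant\; 1.
\]

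Against this I would place the standard upper bound $\beta_1^{(2)}(H)\leqslant d(H)-1$, valid for any infinite finitely generated group $H$, where $d(H)$ is the minimal number of generators. This follows by computing $L^2$-homology from a $K(H,1)$ built with a single vertex and $d(H)$ edges: since $\beta_0^{(2)}(H)=0$ the first cellular boundary operator has dense image, so its kernel has von Neumann dimension $d(H)-1$, which in turn bounds $\beta_1^{(2)}(H)$ from above. As $H$ is two-generated and infinite, this upper bound is $1$, and combined with the displayed lower bound it forces $k=1$ and $|\mathbf{x}|=3$.

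We are thus reduced to the case $H=G$, in which $G$ is itself two-generated while admitting a presentation with three generators and a single relator. By the classification of two-generated one-relator groups recalled in the introduction, the relator $R$ must then be primitive and $G\cong F_2$, whose every subgroup is free; in particular $H$ is free. The one delicate point I foresee is simply a clean citation of the rank bound $\beta_1^{(2)}(H)\leqslant d(H)-1$ (e.g.\ from L\"uck's $L^2$-invariants monograph); the rest of the argument is a direct comparison of Betti-number estimates together with the classification already invoked in the introduction.
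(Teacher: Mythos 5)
Your proof is correct and follows essentially the same route as the paper: reduce via the Louder--Wilton characterisation to showing that every two-generated subgroup is free, rule out two-generated subgroups of finite index by an Euler-characteristic count, and dispose of the residual case $H=G$ with $|\mathbf{x}|=3$ via the classification of two-generated one-relator groups. The only difference is the inequality used in the finite-index step: you combine $\beta_1^{(2)}(H)=[G:H]\,(|\mathbf{x}|-2)$ (Dicks--Linnell plus multiplicativity) with the rank bound $\beta_1^{(2)}(H)\leqslant d(H)-1$ for infinite finitely generated groups, whereas the paper uses the more elementary estimate $\beta_1(H)\geqslant 1-\chi(H)=1-[G:H]\,\chi(G)\geqslant 3$ coming from two-dimensionality of $H$; both are valid.
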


\begin{proof}
	The group $G$ is torsion-free by \cref{torsion IF}, and so $\chi(G) = 2 - |\mathbf{x}| \leqslant -1$.
	Any proper finite-index subgroup $H$ has rank at least 3: $\chi(H) \leqslant -2$ but $H$ is also 2-dimensional, so 
	\[\chi(H) = \beta_0(H) - \beta_1(H) + \beta_2(H) \geqslant 1 - \beta_1(H),\]
	 and thus $\beta_1(H) \geqslant 3$.
	This implies that all proper subgroups of $G$ that are two-generated are of infinite index, and thus free.

	We still need to check what happens if $G$ itself is two-generated.
    By a standard one-relator group fact, since $\chi(G) \leqslant -1$ this is only possible if $G \cong F_2$ is free.
	We are therefore done by Theorem~\ref{thm:free_subgroups_detect_negative_immersions}.
\end{proof}

However, we have been unable to prove the analogous result for the groups $G=\langle \mathbf{x}\mid R\rangle$ of \cref{conj:MandRF} when $|\mathbf{x}|\geqslant3$, so we have the following question.

\begin{question}
	Let $G=\langle \mathbf{x}\mid R\rangle$ be a residually finite Mel'nikov group with $|\mathbf{x}|\geqslant 3$. Does $G$ necessarily have negative immersions?
\end{question}

\subsection{The Surface Subgroup Conjecture for Hyperbolic Groups}
\label{sec:conjIF}

One of the most famous conjectures in geometric group theory, attributed to Gromov on Bestvina's list of questions \cite[Q 1.6]{Bestvina2004questions}, states that every one-ended hyperbolic group contains a surface subgroup.
This is known to be true in many cases (we give \cite{Wilton2018Essential} as a general reference), notably for the fundamental groups of hyperbolic $3$-manifolds \cite{Kahn2012Immersing}. We now prove that this conjecture implies \cref{conj:IF} thanks to a recent theorem of Mutanguha on hyperbolicity of ascending HNN extensions of free groups (it also follows from a recent preprint of Linton).
We also apply Kerckhoff's Nielsen Realisation Theorem:

\begin{theorem}[\cite{Kerckhoff1983}]
    \label{thm:virtual_surface}
    A torsion free group that is virtually a surface group is itself a surface group.
\end{theorem}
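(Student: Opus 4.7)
The plan is to realise the outer action of $G/H$ on $H$ geometrically via Nielsen Realisation, then identify $G$ with the fundamental group of a surface quotient. Let $H \leqslant G$ be a finite-index surface subgroup; after replacing $H$ by its normal core we may assume $H \trianglelefteq G$. Setting $Q = G/H$, we obtain a short exact sequence
\[
1 \to H \to G \to Q \to 1
\]
with $H = \pi_1(\Sigma)$ for some closed surface $\Sigma$ and $Q$ finite.

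I would first dispatch the case $\chi(\Sigma) = 0$, in which $H$ is virtually $\Z^2$. Then $G$ is itself a torsion-free group that is virtually $\Z^2$, so by the two-dimensional case of Bieberbach's theorem $G$ is either $\Z^2$ or the Klein bottle group, both of which are surface groups.

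Suppose instead $\chi(\Sigma) < 0$. Then $Z(H)$ is trivial, which (via the vanishing of the relevant low-dimensional cohomology with trivial coefficients) implies that the extension $G$ of $Q$ by $H$ is determined up to isomorphism by the induced outer action $\theta \colon Q \to \out(H)$, and moreover forces $\theta$ to be injective. Under the Dehn--Nielsen--Baer identification $\out(H) \cong \mathrm{MCG}^{\pm}(\Sigma)$, Kerckhoff's Nielsen Realisation Theorem produces a hyperbolic metric on $\Sigma$ together with a faithful action of $Q$ on $\Sigma$ by isometries realising $\theta$. The orbifold fundamental group $\pi_1^{\mathrm{orb}}(\Sigma/Q)$ fits into a short exact sequence realising the same outer action $\theta$, and is therefore isomorphic to $G$.

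To finish I would argue that the $Q$-action on $\Sigma$ must be free: every non-trivial finite-order isometry of $\mathbb{H}^2$ (whether elliptic rotation or orientation-reversing reflection) has a fixed point in $\mathbb{H}^2$, so any orbifold singularity of $\Sigma/Q$ would contribute a torsion element to $\pi_1^{\mathrm{orb}}(\Sigma/Q) \cong G$, contradicting our hypothesis. Hence $\Sigma/Q$ is a closed surface with fundamental group $G$, so $G$ is a surface group. The one deep input here is Kerckhoff's theorem itself; the surrounding argument is a formal combination of standard extension theory, the Dehn--Nielsen--Baer theorem, and Bieberbach's classification in the flat case, so the only real obstacle to the proof is the appeal to Nielsen Realisation.
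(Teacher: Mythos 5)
The paper offers no proof of this statement: it is imported as a black box from Kerckhoff's Nielsen Realisation paper, with the remark that in the instances where it is actually applied the group surjects onto $\Z$, so only Eckmann--M\"uller's Theorem A is needed. Your argument is the standard derivation of this corollary from Nielsen Realisation, and it is essentially correct; there is therefore nothing in the paper to compare it against beyond the citation. A few points are worth tightening. First, injectivity of $\theta$ does not follow from $Z(H)=1$ together with the cohomological vanishing alone (the split extension $H\times Q$ has trivial outer action); it uses torsion-freeness of $G$: since $C_G(H)\cap H=Z(H)=1$, the centraliser $C_G(H)$ embeds in the finite group $Q$ and is therefore trivial, which kills the kernel of $\theta$. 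Second, Dehn--Nielsen--Baer and Kerckhoff's theorem are usually stated for orientable surfaces, so if $\Sigma$ is non-orientable you should first pass to a further finite-index normal subgroup corresponding to the orientation cover (or invoke the non-orientable versions of both results). Third, in the flat case the appeal to Bieberbach needs the observation that $G$ is genuinely crystallographic: replace $\Z^2$ by its centraliser in $G$, which is again free abelian of rank $2$ by Schur's theorem and torsion-freeness, after which the finite quotient acts faithfully and the algebraic characterisation of crystallographic groups applies. Finally, for freeness of the $Q$-action the cleanest phrasing is via Cartan's fixed-point theorem: the group of all lifts of $Q$ to $\mathbb{H}^2$ is an extension of $Q$ by $H$ realising $\theta$, hence isomorphic to $G$ and torsion-free, while every finite-order isometry of $\mathbb{H}^2$ fixes a point; so this group acts freely, and $\Sigma/Q$ is a closed surface of negative Euler characteristic with fundamental group $G$. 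None of these is a serious gap.
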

In the instances where we apply this theorem, the group surjects onto $\Z$ so an earlier result of Eckmann and M\"{u}ller is sufficient \cite[Theorem A]{EckmannMueller82}.

First we prove the following general result.

\begin{proposition}
\label{prop:hyperbolicity}
Let $G$ be a finitely generated group which admits an epimorphism to $\mathbb{Z}$. If all infinite-index subgroups of $G$ are free, then $G$ is hyperbolic or a surface group.
\end{proposition}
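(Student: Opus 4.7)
The plan is to set $K=\ker\rho$ for the given epimorphism $\rho\colon G\to\mathbb{Z}$. By hypothesis $K$, having infinite index, is free, and $G$ is torsion-free by \cref{torsion IF}. I would then perform a dichotomy based on the presence of a $\mathbb{Z}^2$-subgroup in $G$.

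If $G$ contains a copy of $\mathbb{Z}^2$, then this subgroup is not free and so by hypothesis must have finite index in $G$, making $G$ virtually $\mathbb{Z}^2$. As $G$ is torsion-free, \cref{thm:virtual_surface} identifies $G$ with either $\mathbb{Z}^2$ or the Klein bottle group, both of which are surface groups, and we are done.

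If $G$ contains no $\mathbb{Z}^2$, the goal becomes to prove $G$ is hyperbolic, which is where Mutanguha's theorem is brought to bear. The low-rank cases, where $K$ is trivial or cyclic, are immediate: $G$ is then $\mathbb{Z}$, $\mathbb{Z}^2$, or the Klein bottle group, but the latter two contain $\mathbb{Z}^2$ and are covered by the previous case. Otherwise $K$ has rank at least $2$, and the plan is to realise $G$ as an ascending HNN extension $F*_\phi$ of a finitely generated free group $F$ along an injective endomorphism $\phi$, so that Mutanguha's theorem — stating that $G=F*_\phi$ is word hyperbolic if and only if $\phi$ is atoroidal — applies. When $K$ is finitely generated, $G=K\rtimes_\phi\mathbb{Z}$ for some $\phi\in\mathrm{Aut}(K)$, which is a (degenerate form of) ascending HNN extension. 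When $K$ is infinitely generated, a Bieri--Neumann--Strebel style analysis, exploiting the freeness of $K$ and the finite generation of $G$, should produce a finitely generated free subgroup $F\le K$ closed under conjugation by a suitable lift $t\in\rho^{-1}(1)$, yielding the ascending HNN structure. Given such a structure, any failure of $\phi$ to be atoroidal would yield a $\mathbb{Z}^2$-subgroup of $G$, contradicting the case assumption; hence $\phi$ is atoroidal and $G$ is hyperbolic.

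The main technical obstacle is the reduction to an ascending HNN extension of a finitely generated free group when $K$ is infinitely generated: producing the required $t$-invariant finitely generated free subgroup of $K$ is delicate and relies on a careful interplay between the freeness of $K$, the finite generation of $G$, and the BNS invariant $\Sigma^1(G)$. Pathological configurations in which neither $[\rho]$ nor $[-\rho]$ lies in $\Sigma^1(G)$ must be either ruled out or handled directly, possibly by identifying $G$ as a surface group through a recognition result before invoking Mutanguha's theorem.
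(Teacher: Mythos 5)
Your overall strategy (torsion-freeness, freeness of $K=\ker\rho$, and an appeal to Mutanguha to get hyperbolicity) is the right one, but as written the argument has two genuine gaps, one of which you flag yourself and one of which you do not.

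The first gap is the reduction to an ascending HNN extension of a \emph{finitely generated} free group when $K$ has infinite rank. This is not a delicate-but-doable step: there is no reason for $G$ to admit an ascending HNN structure over a finitely generated free subgroup of $K$ compatible with $\rho$ at all (Bieri--Strebel gives an HNN splitting with finitely generated vertex and edge groups, but not an ascending one unless $[\rho]$ or $[-\rho]$ lies in $\Sigma^1(G)$, which you cannot assume). The paper avoids this entirely by invoking \cite[Corollary 7.6]{Mutanguha2021Dynamics} in the form that applies directly to the mapping torus of an automorphism of a free group of \emph{possibly infinite rank}: if such a finitely generated group is not hyperbolic, it contains some $\BS(1,n)$ with $|n|\geqslant 1$. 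With that citation there is nothing to reduce, and no case split on the rank of $K$ is needed.

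The second gap is that your dichotomy on $\mathbb{Z}^2$-subgroups is the wrong one. The equivalence ``non-hyperbolic $\Leftrightarrow$ contains $\mathbb{Z}^2$'' is Brinkmann's theorem for \emph{automorphisms} of finitely generated free groups; for a properly ascending HNN extension (or for the infinite-rank mapping torus), failure of hyperbolicity is witnessed by a subgroup $H\cong\BS(1,n)$, and for $|n|\geqslant 2$ this contains no $\mathbb{Z}^2$. So in your ``no $\mathbb{Z}^2$'' branch the claimed contradiction does not materialise, and $G$ could a priori be non-hyperbolic. The fix is to use the freeness hypothesis once more, as the paper does: $H\cong\BS(1,n)$ is not free, hence has finite index in $G$; if $|n|\geqslant 2$ then $H\cong\mathbb{Z}[1/n]\rtimes\mathbb{Z}$, and $\mathbb{Z}[1/n]$ is a non-free subgroup of infinite index in $G$, a contradiction. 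This forces $|n|=1$, so $H$ is the torus or Klein bottle group and $G$ is a surface group by \cref{thm:virtual_surface}. Your treatment of the $\mathbb{Z}^2$ case and the use of \cref{torsion IF} and \cref{thm:virtual_surface} are fine and match the paper's endgame.
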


\begin{proof}
Suppose that $G$ is not hyperbolic. Then since the kernel of an epimorphism $G \to \Z$ is free, $G$ is the mapping torus of an automorphism of a (possibly infinite rank) free group and so it must contain a Baumslag--Solitar subgroup $H\cong \BS(1, n)$ with $|n|\geqslant 1$ by \cite[Corollary 5.3.6]{Mutanguha2021Dynamics}.
However, every infinite index subgroup of $G$ is free and so $H$ has finite index in $G$.
Now, $|n|=1$ as otherwise $H$ decomposes as $\mathbb{Z}[1/n]\rtimes\mathbb{Z}$ and so $H$, and hence $G$, contains a non-free subgroup of infinite index.
Thus $H$ is a surface group and so is $G$ by \cref{thm:virtual_surface}.
\end{proof}

We now conclude that \cref{conj:IF} follows from a weakened version of The Surface Subgroup Conjecture for Hyperbolic Groups, specific to one-relator groups.

\begin{theorem}
\label{thm:IF}
Suppose that every one-ended hyperbolic one-relator group contains a surface subgroup.
Let $G=\langle \mathbf{x}\mid R\rangle$ be an infinite non-free one-relator group with every subgroup of infinite index free. Then $G$ is a surface group.
\end{theorem}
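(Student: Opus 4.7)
The strategy is to verify the hypotheses of the assumed Surface Subgroup statement for $G$, and then use the infinite-index-free condition to promote the resulting surface subgroup to all of $G$ via Kerckhoff's Nielsen Realisation Theorem.

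First I would dispatch some preliminaries. By \cref{torsion IF}, $G$ is torsion-free, and since $G$ is infinite and non-free we must have $|\mathbf{x}|\geqslant 2$ (a one-generator one-relator group with $R\neq 1$ is finite cyclic). Consequently the abelianisation of $G$ has rank at least $|\mathbf{x}|-1\geqslant 1$, so $G$ surjects onto $\mathbb{Z}$. This lets me invoke \cref{prop:hyperbolicity} to conclude that $G$ is either a surface group, in which case we are done, or hyperbolic; I proceed under the hyperbolic assumption.

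The key step is to show that $G$ is one-ended, so that the assumed form of the Surface Subgroup Conjecture applies. Since $G$ is torsion-free, Stallings' ends theorem reduces this to proving that $G$ is freely indecomposable and not isomorphic to $\mathbb{Z}$. The latter is immediate as $G$ is non-free. For the former, suppose for a contradiction that $G=A*B$ is a nontrivial free product; using torsion-freeness to exclude the $\mathbb{Z}/2*\mathbb{Z}/2$ case, both $A$ and $B$ must have infinite index in $G$, and hence by hypothesis both are free, which would make $G$ itself free --- a contradiction. This is the step I anticipate requires the most care, although it amounts to routine Bass--Serre bookkeeping.

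Finally, applying the assumed Surface Subgroup statement to the one-ended hyperbolic one-relator group $G$ yields a surface subgroup $S\leqslant G$. Since closed surface groups are not free, the hypothesis that every infinite-index subgroup is free forces $[G:S]<\infty$. Thus $G$ is a torsion-free group that is virtually a surface group, and \cref{thm:virtual_surface} then upgrades this to $G$ itself being a surface group, completing the proof.
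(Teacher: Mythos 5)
Your proposal is correct and follows essentially the same route as the paper: reduce to the hyperbolic case via \cref{prop:hyperbolicity}, establish one-endedness from the fact that a free splitting would produce a non-free infinite-index subgroup, and then promote the resulting finite-index surface subgroup using \cref{thm:virtual_surface}. Your only additions are making explicit the surjection onto $\mathbb{Z}$ needed to invoke \cref{prop:hyperbolicity} and spelling out the one-endedness step via Stallings' theorem, both of which the paper leaves implicit.
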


\begin{proof}
By Proposition \ref{prop:hyperbolicity}, $G$ is a surface group, and we are done, or $G$ is hyperbolic.
(Linton has recently proven that every one-relator group whose presentation complex has negative immersions is hyperbolic \cite{linton2022one}, so this step also follows by combining \cref{corol:2GenIF} and \cref{thm:SGC_negative_immersions}, dealing with the cases of $|\mathbf{x}|=2$ and $|\mathbf{x}|\geqslant3$ respectively.)

Any splitting of the non-free hyperbolic group $G$ would imply the existence of a non-free infinite index subgroup, so $G$ is one-ended.
Thus in the case that it is hyperbolic, our assumptions tell us that it contains a surface subgroup $H$, which must have finite index in $G$.
As $G$ is torsion-free we are done by \cref{thm:virtual_surface}.
\end{proof}

That \cref{conj:IF} follows from The Surface Subgroup Conjecture for Hyperbolic Groups is now immediate.

\bibliographystyle{amsalpha}
\bibliography{BibTexBibliography}
\end{document}